\numberwithin{equation}{section}
\newtheorem{thm}[equation]{Theorem} 
\newtheorem{cor}[equation]{Corollary}
\newtheorem*{clm*}{Claim}
\theoremstyle{definition}
\newcommand{\cproof}{\noindent{\it Proof of Claim.}\ } 
\newcommand{\cqed}{\hfill\rule{1.3mm}{3mm}}
\newcommand{\wec}[1]{{\mathbf{#1}}}  
\newcommand{\wrel}[1]{\;#1\;}     
\newcommand{\m}[1]{{\mathbf{\uppercase{#1}}}}
\DeclareMathOperator{\Con}{Con}
\newcommand{\cg}{\mathrm{Cg}}
\newcommand{\C}[1]{{\mathbf{\uppercase{#1}}}}
\newcommand{\Cg}[1]{{\cg}^{\m{#1}}}
\begin{document}

\title[When do abelian tolerances generate abelian congruences?]{What is the weakest idempotent
  Maltsev condition that implies
  that abelian tolerances generate \\ abelian congruences?}

\author{Keith A. Kearnes}
\address[Keith A. Kearnes]{Department of Mathematics\\
University of Colorado\\
Boulder, CO 80309-0395\\
USA}
\email{kearnes@colorado.edu}

\author{Emil W. Kiss}
\address[Emil W. Kiss]{
Lor\'{a}nd E{\"o}tv{\"o}s University\\
Department of Algebra and Number Theory\\
H--1117 Budapest, P\'{a}zm\'{a}ny P\'{e}ter s\'{e}t\'{a}ny 1/c.\\
Hungary}
\email{ewkiss@cs.elte.hu}

\subjclass[2010]{Primary: 08B05;  Secondary: 08A05}
\keywords{abelian congruence, abelian tolerance, Maltsev condition}

\begin{abstract}
We answer the question in the title. In the process,
we correct an error in our AMS Memoir
{\it The Shape of Congruence Lattices}.
\end{abstract}

\maketitle

\section{Introduction}\label{intro}
Our 2013 AMS Memoir \cite{shape}
contains the following claim.
\medskip

\noindent
{\bf Theorem 3.24 of \cite{shape}.}
Assume that $\mathcal V$
satisﬁes a nontrivial idempotent Maltsev condition.
If $\m a\in {\mathcal V}$
has a tolerance $T$ and a congruence $\delta$
such that $\C C(T,T;\delta)$ holds,
then $\C C(\Cg a (T),\Cg a (T);\delta)$ holds.
\medskip

In the case where $\delta=0$, the conclusion of this theorem
asserts that if $T$ is an abelian tolerance, then
it generates an abelian congruence.
\medskip

The recent paper \cite{relative} of the first author
contains the following claim.
\medskip

\noindent
{\bf Theorem 4.5 of \cite{relative}.}
The following are equivalent for a variety $\mathcal V$.
\begin{enumerate}
\item $\mathcal V$ has a weak difference term.
\item Whenever $\m a\in \mathcal V$ and $\alpha\in\Con(\m a)$
  is abelian, the interval $I[0,\alpha]$ consists of permuting
  equivalence relations.
\item Whenever $\m a\in \mathcal V$ and $\alpha\in\Con(\m a)$
  is abelian, the interval $I[0,\alpha]$ is modular.
\item Whenever $\m a\in \mathcal V$ and $\alpha\in\Con(\m a)$,
  there is no pentagon labeled as in
  Figure~\ref{fig8} with $[\alpha,\alpha]=0$.
(No ``spanning pentagon'' in $I[0,\alpha]$.)  
\item Whenever $\m a\in \mathcal V$ and $\alpha\in\Con(\m a)$
  is abelian, there is no pentagon labeled as in
  Figure~\ref{fig8} where $[\alpha,\alpha]=0$ 
  and $\C C(\theta,\alpha;\delta)$.
\begin{figure}[ht]
\begin{center}
\setlength{\unitlength}{1mm}
\begin{picture}(20,33)
\put(0,15){\circle*{1.2}}
\put(10,0){\circle*{1.2}}
\put(10,30){\circle*{1.2}}
\put(20,10){\circle*{1.2}}
\put(20,20){\circle*{1.2}}

\put(10,0){\line(-2,3){10}}
\put(10,0){\line(1,1){10}}
\put(10,30){\line(-2,-3){10}}
\put(10,30){\line(1,-1){10}}
\put(20,20){\line(0,-1){10}}

\put(-4.5,13){$\beta$}
\put(22,9){$\delta$}
\put(22,19){$\theta$}
\put(9,32){$\alpha$}
\put(9,-5){$0$}
\end{picture}
\bigskip

\caption{\sc Forbidden sublattice if both $[\alpha,\alpha]=0$ and
  $\C C(\theta,\alpha;\delta)$ hold.}\label{fig8}
\end{center}
\end{figure}
\end{enumerate}  
\medskip

During the final editing of \cite{relative}, we realized
that Theorem~3.24 of \cite{shape} is not consistent
with Theorem~4.5 of \cite{relative}.
A close examination revealed an error in the proof
of Theorem~3.24 of \cite{shape}.
In this paper, we identify the error and
correct Theorem~3.24 and
correct the results of \cite{shape}
which depend on Theorem~3.24.
We include a short summary of the affected
results at the end of Section~\ref{consequences}.

\section{What is the error?}

As we have written above,
Theorem~3.24 of \cite{shape} is false as stated.
In this section we identify the error in
the proof of Theorem~3.24.
In later sections of this article we will correct Theorem~3.24
and correct the results of \cite{shape}
that depend on Theorem~3.24.

Throughout this article we use the terminology and notation
of the Memoir, \cite{shape}.
The arguments there and here 
rely on the basic properties of the centralizer relation,
which are
enumerated in the following theorem.
\medskip

  \noindent
{\bf Theorem~2.19 of \cite{shape}.}  
  Let $\m a$ be an algebra with tolerances $S$, $S'$, $T$,
  $T'$ and congruences $\alpha$, $\alpha_i$, $\beta$,
  $\delta$, $\delta'$, $\delta_j$.  The following are true.
  \begin{enumerate}
  \item[(1)] {\rm (Monotonicity in the first two variables)}
    If\/ $\C C(S,T;\delta)$ holds and $S'\subseteq S$,
    $T'\subseteq T$, then $\C C(S',T';\delta)$ holds.
  \item [(2)] $\C C(S,T;\delta)$ holds if and only if\/ $\C
    C(\Cg a(S),T;\delta)$ holds.
  \item [(3)] $\C C(S, T; \delta)$ holds if and only if\/
    $\C C(S, \delta\circ T\circ \delta; \delta)$ holds.
  \item [(4)] If\/ $T\cap \delta = T\cap\delta'$, then
    $\C C(S,T;\delta)\wrel{\Longleftrightarrow} \C C(S,T;\delta')$.
  \item [(5)] {\rm (Semidistributivity in the first
      variable)} If\/ $\C C(\alpha_i,T;\delta)$ holds for
    all $i\in I$, then\/ $\C C(\bigvee_{i\in
      I}\alpha_i,T;\delta)$ holds.
  \item[(6)] If\/ $\C C(S,T;\delta_j)$ holds for all $j\in
    J$, then\/ $\C C(S,T;\bigwedge_{j\in J}\delta_j)$ holds.
  \item [(7)] If\/ $T\cap\big(S\circ (T\cap\delta)\circ
    S\big) \subseteq\delta$, then\/ $\C C(S,T;\delta)$
    holds.
  \item [(8)] If\/
    $\beta\wedge\big(\alpha\vee(\beta\wedge\delta)\big)
    \leq\delta$, then\/ $\C C(\alpha,\beta;\delta)$ holds.
  \item [(9)] Let\/ $\m b$ be a subalgebra of\/ $\m a$.
    If\/ $\C C(S, T; \delta)$ holds in $\m a$, then\/ $\C
    C(S|_{\m b}, T|_{\m b}; \delta|_{\m b})$ holds in $\m
    b$.
  \item [(10)] If\/ $\delta'\leq \delta$, then the
    relation\/ $\C C(S, T; \delta)$ holds in $\m a$ if and
    only if\/ $\C C(S/\delta', T/\delta'; \delta/\delta')$
    holds in $\m a/\delta'$.
  \end{enumerate}
\bigskip

The proof of
\cite[Theorem~3.24]{shape} is a little
longer than a page and a half. Just before the half-page mark we read

\medskip

\noindent
{\it Claim~3.25. $\Delta\cap R_i=0$
 is the equality relation for both $i = 1$ and $2$.}
\medskip

\noindent
This claim is proved correctly.
After the statement of Claim~3.26 of \cite{shape}, the proof proceeds
with
\medskip

\noindent
{\it From Claim~3.25 and Theorem 2.19 (8) we get that
$\C C(R_i,\Delta;0)$ holds for $i = 1$
and $2$.}
\medskip

\noindent
The relations $\alpha,\beta,\delta$ that appear
in the statement of Theorem~2.19~(8)
are congruences.
Suppose for a moment that $\Delta$ and $R_i$
from Claim~3.25 are congruences.
Then we could label
$\alpha:=R_i$, $\beta:=\Delta$, and $\delta:=0$ in Claim~3.25
and derive that
\[
\begin{array}{rl}
  \beta\wedge (\alpha\vee (\beta\wedge\delta))
  &=\Delta\wedge(R_i\vee(\Delta\wedge 0))\\
&=\Delta\wedge R_i\\
&=0\\
&\leq \delta,
\end{array}
\]
and we could indeed conclude from Theorem~2.19~(8)
that $\C C(R_i,\Delta;0)$ holds, and the proof
could be concluded as we did in \cite{shape}.
Unfortunately, in the proof of Theorem~3.24 of \cite{shape},
$R_i$ is only a tolerance and not necessarily a congruence
(it is a reflexive, symmetric, compatible, binary relation, 
but it need not be a transitive relation).
This means that Theorem~2.19~(8) does not apply to the situation.
Theorem~2.19~(7) is more appropriate for tolerances.
Using it, one may correctly derive from Claim~3.25 
(i.e., from the claim
that $\Delta\cap R_i=0$ holds) that $\C C(\Delta,R_i;0)$ holds.
But this is different than $\C C(R_i,\Delta;0)$,
which is what we needed for the rest of the proof of Theorem~3.24.
If we had proved a slightly stronger statement
in Claim~3.25, namely that $\Delta\cap (R_i\circ R_i)=0$, then we
could correctly derive from Theorem~2.19~(7)
that $\C C(R_i,\Delta;0)$ holds.
But, as it happens, there is no way to correct the
proof of Theorem~3.24, since the statement of the
theorem is false, as we will see.
It will be corrected and strengthened in
Theorem~\ref{main} of the next section.

\section{The Main Correction}

Let $\mathcal V$ be a variety.
Let $\mathscr A$ be the property
``for every $\m a\in {\mathcal V}$
and every tolerance $T$ on $\m a$,
if $T$ is abelian, then the congruence
on $\m a$ generated by $T$ is abelian.''
Let ${\mathscr T}$ denote the Maltsev definable property
``there exists a Taylor term''.
Let ${\mathscr W}$ be the Maltsev definable property
``there exists a weak difference term''.
The incorrect result 
\cite[Theorem~3.24]{shape} asserts that
for any variety $\mathcal V$, ${\mathscr T}\Rightarrow {\mathscr A}$.
If true, this would imply that the conjunction of
the properties 
${\mathscr T}$ and ${\mathscr A}$ is equivalent to
the property 
${\mathscr T}$. We shall learn in Theorem~\ref{main} that
the conjunction of
the properties 
${\mathscr T}$ and ${\mathscr A}$ is equivalent to the property
${\mathscr W}$. This is enough to disprove
\cite[Theorem~3.24]{shape}, since it is known that
${\mathscr W}$ is slightly (but properly) stronger that
${\mathscr T}$.

In more detail,
a \emph{Taylor term} for a variety $\mathcal V$
is a $\mathcal V$-term $T(x_1,\ldots,x_{n})$ such that
$\mathcal V$ satisfies the
identity $T(x,\ldots,x)\approx x$
and, for each $i$ between $1$ and $n$, $\mathcal V$ satisfies
some identity of the form $T(\wec{w})\approx T(\wec{z})$
with $w_i \neq z_i$.
A \emph{weak difference term} for $\mathcal V$
is a $\mathcal V$-term $w(x,y,z)$ such that 
for any $\m b\in {\mathcal V}$,
$w^{\m b}(a,a,b) = b=w^{\m b}(b,a,a)$ holds
whenever the pair $(a,b)$ is contained in
an abelian
congruence.
It is known that the class of varieties
with a weak difference term is definable
by a nontrivial idempotent Maltsev condition
(see \cite[Theorem~4.8]{kearnes-szendrei}).
It is known that the class of varieties
with a Taylor term is definable
by the \emph{weakest} nontrivial idempotent Maltsev condition.
(This is derivable from Corollaries~5.2 and 5.3 of
\cite{taylor}.)
In particular, this proves that
${\mathscr W}\Rightarrow {\mathscr T}$.
It is known that
${\mathscr T}\not\Rightarrow {\mathscr W}$
for arbitrary varieties
(see \cite[Example~4.4]{relative}).
It is also known that
${\mathscr W}\Leftrightarrow {\mathscr T}$
for locally finite varieties
(see \cite[Theorem~9.6]{hobby-mckenzie}).
These facts,
(i) ${\mathscr W}\Rightarrow {\mathscr T}$,
(ii) ${\mathscr T}\not\Rightarrow {\mathscr W}$,
(iii) ${\mathscr W}\Leftrightarrow {\mathscr T}$ for locally finite
varieties,
are the data supporting our statement that 
``$\mathscr W$ is slightly (but properly) stronger than $\mathscr T$''.

The next theorem proves that
${\mathscr T}\Rightarrow ({\mathscr W}\Leftrightarrow {\mathscr A})$.
Since
${\mathscr W}\Rightarrow {\mathscr T}$, it is possible to
derive 
${\mathscr W}\Leftrightarrow ({\mathscr T}\;\&\; {\mathscr A})$
from this.
In the terminology of \cite{relative}, this means
that the property
${\mathscr A}$ is Maltsev definable \emph{relative} to
${\mathscr T}$ by the Maltsev condition that
defines $\mathscr W$.

\begin{thm} \label{main}
  Let $\mathcal V$ be a variety that has a Taylor term
(i.e., $\mathcal V$ satisfies ${\mathscr T}$).
The following are equivalent properties for $\mathcal V$:
\begin{enumerate}
\item[(1)]
  $\mathcal V$ has a weak difference term.
  ($\mathcal V$ satisfies ${\mathscr W}$.)
\item[(2)]  $\mathcal V$ has term $w(x,y,z)$
  such that, whenever $\m a\in {\mathcal V}$,
  $T$ is an abelian tolerance on $\m a$, and
  $(a,b)\in T$, then $w^{\m a}(a,a,b) = b = w^{\m a}(b,a,a)$.
  ($w$ is a ``weak difference term for tolerances''.)
\item[(3)] In any algebra of $\mathcal V$,
  any abelian tolerance is a congruence.
\item[(4)]
  In any algebra of $\mathcal V$,
  the congruence generated by an
  abelian tolerance is abelian.
  ($\mathcal V$ satisfies ${\mathscr A}$.)
\end{enumerate}
\end{thm}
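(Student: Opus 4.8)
The plan is to prove the cycle $(1)\Rightarrow(2)\Rightarrow(3)\Rightarrow(4)\Rightarrow(1)$; the hypothesis that $\mathcal V$ has a Taylor term is needed only for the last implication. The implication $(3)\Rightarrow(4)$ is immediate: under $(3)$ an abelian tolerance $T$ on $\m a\in\mathcal V$ already equals $\Cg a(T)$, and this congruence is abelian by hypothesis.

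For $(1)\Rightarrow(2)$, the plan is to show that a weak difference term $w$ of $\mathcal V$ is automatically a weak difference term for tolerances. Fix $\m a\in\mathcal V$, an abelian tolerance $T$ on $\m a$, and $(a,b)\in T$; the goal is $w^{\m a}(a,a,b)=b$ together with $w^{\m a}(b,a,a)=b$. The idea is to move to the algebra $\m t\le\m a\times\m a$ whose universe is $T$, where the elements $\widehat a=(a,a)$, $p=(a,b)$, $q=(b,a)$, $\widehat b=(b,b)$ are all present (reflexivity and symmetry of $T$) and are linked through the coordinate projections of $\m t$. Using the properties of the centralizer relation collected in Theorem~2.19 of \cite{shape}, one exhibits an abelian congruence of $\m t$ joining the relevant pairs, applies the defining identities of the weak difference term $w$ inside $\m t$, and reads the outcome off in both coordinates to obtain $w^{\m a}(a,a,b)=b$ and $w^{\m a}(a,b,b)=a$; the identity $w^{\m a}(b,a,a)=b$ follows by the symmetric computation. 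The point demanding care is pinning down the right abelian congruence on $\m t$: the bare projection kernels of $\m t$ need not be abelian even though $T$ is, so one is forced to work with a smaller congruence.

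For $(2)\Rightarrow(3)$, let $w$ be a weak difference term for tolerances and let $T$ be an abelian tolerance on $\m a\in\mathcal V$; it is enough to prove $T\circ T\subseteq T$. Given $a\mathrel T b\mathrel T c$, put $d=w^{\m a}(a,b,c)$. Since $(a,b),(b,c)\in T$ and $T$ is abelian, $w^{\m a}(a,b,b)=a$ and $w^{\m a}(b,b,c)=c$, and compatibility of $T$ then gives $a\mathrel T d$ and $d\mathrel T c$. To contract this two-step path to the single step $a\mathrel T c$ one feeds an appropriate $2\times 2$ matrix assembled from $w$, $a$, $b$, $c$ into the term condition $\C C(T,T;0)$; arranging that matrix so that the term condition actually delivers $a\mathrel T c$ is the substance of this step.

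For $(4)\Rightarrow(1)$, we invoke the Taylor-term hypothesis and Theorem~4.5 of \cite{relative}, arguing contrapositively. If $\mathcal V$ has no weak difference term, then by (the equivalence of (1) and (4) in) Theorem~4.5 there are $\m a\in\mathcal V$ and an abelian congruence $\alpha$ of $\m a$ carrying a spanning pentagon in $I[0,\alpha]$, that is, a sublattice labelled as in Figure~\ref{fig8} with $[\alpha,\alpha]=0$; since $[\alpha,\alpha]=0$ and $\theta\le\alpha$, monotonicity of the centralizer forces $\C C(\theta,\alpha;\delta)$ to hold as well. From this forbidden lattice shape one must manufacture a single algebra $\m b\in\mathcal V$ bearing an abelian tolerance whose generated congruence is \emph{not} abelian, which contradicts property $\mathscr A$. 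This construction is the main obstacle of the proof: it necessarily leaves $\m a$ itself (every tolerance built directly inside $I[0,\alpha]$ lies below the abelian congruence $\alpha$, hence cannot by itself violate $\mathscr A$), and it is exactly here that the Taylor-term hypothesis — without which, e.g., the variety of sets would be a counterexample — and the full strength of the centralizer calculus of \cite{shape} come into play.
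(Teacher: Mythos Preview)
Your cycle $(1)\Rightarrow(2)\Rightarrow(3)\Rightarrow(4)\Rightarrow(1)$ matches the paper's, and $(3)\Rightarrow(4)$ is fine. The other three implications, however, are left as acknowledged gaps, and in each case the paper's argument is both different from and shorter than what you sketch.

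For $(2)\Rightarrow(3)$, introducing $d=w^{\m a}(a,b,c)$ only produces $a\mathrel T d\mathrel T c$, which is no improvement on $a\mathrel T b\mathrel T c$, and there is no evident $T,T$-matrix whose term condition collapses this. The paper bypasses $d$ entirely: since $(a,b),(b,b),(b,c)\in T$ and $T\le\m a^2$, one computes $w^{T}\bigl((a,b),(b,b),(b,c)\bigr)=\bigl(w^{\m a}(a,b,b),w^{\m a}(b,b,c)\bigr)=(a,c)\in T$ directly from $(2)$ (using that $(b,a)\in T$ by symmetry to get $w^{\m a}(a,b,b)=a$). For $(1)\Rightarrow(2)$, the paper sidesteps the difficulty you correctly flag (the projection kernels on $\m t$ need not be abelian) by passing to the idempotent reduct $\mathcal V^{\circ}$: any maximal $T$-block $B\supseteq\{a,b\}$ is a subuniverse of $\m a^{\circ}$, and $B^2\subseteq T$ makes the induced algebra $\m b\in\mathcal V^{\circ}$ abelian; since having a weak difference term is an idempotent Maltsev condition, $\mathcal V^{\circ}$ has one too, and it acts as a Maltsev operation on the abelian algebra $\m b$, giving $w^{\m a}(a,a,b)=b=w^{\m a}(b,a,a)$. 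For $(4)\Rightarrow(1)$, your instinct to leave $\m a$ is right, but the construction is concrete and simple: pass to $\m a/\delta$ and set $T=(\delta\circ\beta\circ\delta)/\delta$. A short centralizer calculation from $[\beta,\beta]=0$ and $\beta\cap\delta=0$ (Theorem~2.19~(4),(5),(3),(1),(10) of \cite{shape}) yields $\C C(T,T;0)$, while the congruence $T$ generates is $(\beta\vee\delta)/\delta=\alpha/\delta$, which is nonabelian because $\C C(\beta,\theta;\delta)$ fails by the Taylor-term half of \cite[Theorem~4.16~(2)]{shape}. Finally, your derivation of $\C C(\theta,\alpha;\delta)$ from $[\alpha,\alpha]=0$ ``by monotonicity'' is not valid: monotonicity gives only $\C C(\theta,\alpha;0)$, and the third argument cannot be raised freely (Theorem~2.19~(4) would require $\alpha\cap\delta=\alpha\cap 0$, which fails here). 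Theorem~4.5~(5) of \cite{relative} does supply $\C C(\theta,\alpha;\delta)$ directly, though the paper's argument never actually uses this relation.
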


\begin{proof}
  Assume that Item (1) holds. The class of
  varieties with a weak difference term
  is definable by an idempotent Maltsev condition,
  say $\Sigma$. Let ${\mathcal V}^{\circ}$
  be the idempotent reduct of $\mathcal V$.
  ${\mathcal V}^{\circ}$ also satisfies $\Sigma$,
  hence must have a weak difference term, say $w(x,y,z)$.
  Now choose $\m a\in {\mathcal V}$, let $T$
  be an abelian tolerance of $\m a$, and choose $(a,b)\in T$.
  Observe that $\{a,b\}^2 = \{(a,a),(a,b),(b,a),(b,b)\}\subseteq T$,
  since $T$ is reflexive and symmetric.
  By Zorn's Lemma, we may extend the subset $\{a,b\}\subseteq A$
to a subset $B\subseteq A$ maximal for (i) $\{a,b\}\subseteq B$
  and (ii) $B^2\subseteq T$ ($B$ is a \emph{$T$-block}).
  The maximality of $B$ and
  the fact that $T$ is a tolerance jointly
  guarantee that $B$ is closed
  under all idempotent term operations of $\m a$.
  Thus, if $\m a^{\circ}$ denotes the idempotent reduct of $\m a$,
  then $B$ is a subuniverse of $\m a^{\circ}$.
  The fact that $T$ is an abelian tolerance of
  $\m a$ implies that $T$ is an abelian tolerance
  of the reduct $\m a^{\circ}$, hence $T|_{B}$
  is an abelian tolerance of $\m b\;(\leq \m a^{\circ})$.
  Since $B^2\subseteq T$, it follows that $T|_B$
  is the universal binary relation on $B$,
  hence $\m b$ is an abelian algebra.
  We have that $w(x,y,z)$ is a weak difference term for
  ${\mathcal V}^{\circ}$, 
  $\m b\in {\mathcal V}^{\circ}$, and
  $\{a,b\}\subseteq B$, so $w^{\m b}(a,a,b)=b=w^{\m b}(b,a,a)$.
  Since $w(x,y,z)$ is a term of $\m a$, we have
  $w^{\m a}(a,a,b)=b=w^{\m a}(b,a,a)$ whenever $(a,b)\in T$.
This establishes that   
  $w(x,y,z)$ is a weak difference term
for tolerances for the variety $\mathcal V$,
proving that Item (2) holds.

  Now assume that Item (2) holds.
  Choose any $\m a\in {\mathcal V}$  and any abelian
  tolerance $T$ of $\m a$. Our goal is to
  prove that $T$ is transitive, so choose
  elements satisfying $a\wrel{T}b\wrel{T}c$.
  Since $(a,b),(b,c)\in T$
  and $T$ is a reflexive subalgebra of $\m a^2$
we have $w^{T}((a,b),(b,b),(b,c))\in T$.
Since $w(x,y,z)$ is a weak difference term for
tolerances and $T$ is an abelian tolerance containing
$(a,b)$ and $(b,c)$, we have
$w^{T}((a,b),(b,b),(b,c))=(w^{\m a}(a,b,b),w^{\m a}(b,b,c))=(a,c)$,
proving that $(a,c)\in T$.
This shows that Item (3) holds.

The implication (3)$\Rightarrow$(4) holds since the
congruence generated by a congruence is itself.

To prove the final implication (4)$\Rightarrow$(1)
we explain instead why
$\neg (1) \Rightarrow \neg (4)$ holds.
Assume that Item (1) fails. According to 
Theorem~4.5 of \cite{relative}, which is stated
in the Introduction of this article, some $\m a\in \mathcal V$
  must have a pentagon in its congruence lattice
labeled as in
Figure~\ref{fig8}, and for this pentagon
we may assume that $[\alpha,\alpha]=0$ 
  and $\C C(\theta,\alpha;\delta)$.
  Let $T = (\delta\circ\beta\circ\delta)/\delta$.
  We will complete the proof that (4) fails by arguing that
  $T$ is an abelian tolerance on $\m a/\delta$
  which does not generate an abelian congruence on $\m a/\delta$.

  For the congruences depicted in Figure~\ref{fig8}, we have
  $[\alpha,\alpha]=0$ and $\beta\leq \alpha$,
  so $[\beta,\beta]=0$
  by monotonicity of the commutator. 
  In terms of the centralizer relation, this
  means $\C C(\beta,\beta;0)$ holds.
  Now $\C C(\beta,\beta;0)$ is equivalent to
  $\C C(\beta,\beta;\delta)$
  by Theorem~2.19~(4) above, so
  $\C C(\beta,\beta;\delta)$ must also hold.
  We also have $\C C(\delta,\beta;\delta)$ by
  Theorem~2.19~(8). The statements
  $\C C(\beta,\beta;\delta)$ and
  $\C C(\delta,\beta;\delta)$ may be combined to
  $\C C(\beta\vee \delta,\beta;\delta)$ by
Theorem~2.19~(5).  
  By Theorem~2.19~(3) we derive that
  $\C C(\beta\vee\delta,\delta\circ \beta\circ \delta;\delta)$ holds.
  If $S$ is the tolerance $\delta\circ\beta\circ \delta$ on $\m a$,
  then since
  $S\subseteq \beta\vee\delta$ we derive from
  $\C C(\beta\vee\delta,\delta\circ \beta\circ \delta;\delta)$
  and Theorem~2.19~(1) that 
  $\C C(S, S; \delta)$ holds.
  Now, by Theorem~2.19~(10), we derive that
  $\C C(S/\delta,S/\delta;0)$ holds.
  But $T=S/\delta$, so $\C C(T,T;0)$ holds.
  We have proved that
  $T$ is an abelian tolerance of $\m a/\delta$.

  We argue now that the congruence generated by $T$
  is not abelian.
  The congruence generated by $T=(\delta\circ \beta\circ \delta)/\delta$
  is its transitive closure,
  which is $(\beta\vee\delta)/\delta=\alpha/\delta$.
  To prove that this congruence on $\m a/\delta$ is not abelian,
  it suffices to prove that
  $\C C(\alpha,\alpha;\delta)$
  fails in $\m a$.
  Since $\beta\leq \alpha$ and $\theta\leq \alpha$,
  and the centralizer relation is monotone in its first
  two variables (Theorem~2.19~(1)),
it suffices to prove that
  $\C C(\beta,\theta;\delta)$
fails in $\m a$.
This is the one place in the proof where we invoke the
fact that $\mathcal V$ has a Taylor term.
It is proved in the first part of
\cite[Theorem~4.16~(2)]{shape}
that if $\m a$ is an algebra in a variety
with a Taylor term which has a pentagon in its
congruence lattice labeled as in Figure~\ref{fig8},
then $\C C(\beta,\theta;\delta)$ cannot hold.
This establishes $\neg (4)$.
\end{proof}

We emphasize the most relevant
aspect of the previous theorem in the form of a corollary.

\begin{cor} \label{main_cor}
  Theorem~3.24 of \cite{shape} is correct if the
  hypothesis that $\mathcal V$ has a Taylor term
  (or ``$\mathcal V$ satisfies a nontrivial idempotent Maltsev condition'')
  is strengthened to the 
  hypothesis that $\mathcal V$ has a weak difference term.  
\end{cor}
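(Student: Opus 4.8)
The plan is to read this off from Theorem~\ref{main}. Because a weak difference term is a Taylor term (the implication ${\mathscr W}\Rightarrow{\mathscr T}$ recorded above), the hypothesis of Theorem~\ref{main} is satisfied by any variety with a weak difference term, so all four of its items hold for such a $\mathcal V$; in particular item~(4) holds, i.e.\ in every algebra of $\mathcal V$ the congruence generated by an abelian tolerance is abelian. What remains is to bootstrap this ``$\delta=0$'' statement up to the statement of Theorem~3.24 of \cite{shape} for an arbitrary congruence $\delta$, which is a routine exercise in the centralizer calculus of Theorem~2.19 of \cite{shape}.

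Concretely, I would fix $\m a\in{\mathcal V}$, a tolerance $T$ of $\m a$, and a congruence $\delta$ with $\C C(T,T;\delta)$, and aim for $\C C(\Cg a(T),\Cg a(T);\delta)$. First I would normalize $T$: put $T':=\delta\circ T\circ\delta$, a tolerance with $\delta\circ T'\circ\delta=T'$ and $\Cg a(T')=\Cg a(T)\vee\delta\geq\Cg a(T)$. A short manipulation with Theorem~2.19 (via parts (2),(3),(5),(7) and the monotonicity part (1)) shows $\C C(T,T;\delta)$ implies $\C C(T',T';\delta)$, and, again by monotonicity, a proof of $\C C(\Cg a(T'),\Cg a(T');\delta)$ yields $\C C(\Cg a(T),\Cg a(T);\delta)$. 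Hence I may assume at the outset that $\delta\circ T\circ\delta=T$.

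Next I would pass to the quotient $\m a/\delta\in{\mathcal V}$. By Theorem~2.19~(10) (taken with $\delta'=\delta$), the hypothesis $\C C(T,T;\delta)$ in $\m a$ says precisely that the image tolerance $T/\delta$ is an abelian tolerance of $\m a/\delta$, and, again by Theorem~2.19~(10), the conclusion $\C C(\Cg a(T),\Cg a(T);\delta)$ in $\m a$ is equivalent to the assertion that the congruence $\Cg a(T)/\delta$ of $\m a/\delta$ is abelian. Since $\delta\circ T\circ\delta=T$, the congruence of $\m a/\delta$ generated by $T/\delta$ is exactly $\Cg a(T)/\delta$; so it suffices to know that the congruence generated by the abelian tolerance $T/\delta$ is abelian, which is item~(4) of Theorem~\ref{main} applied to $\m a/\delta$. (Equivalently, item~(3) applied to $\m a/\delta$ shows that $T/\delta$ is already a congruence, whence $\Cg a(T)/\delta=T/\delta$ is abelian because $T/\delta$ is.) This finishes the proof.

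I do not anticipate any genuine obstacle here: the real difficulty has all been absorbed into Theorem~\ref{main}. The one point that calls for care is the quotient bookkeeping --- treating $T/\delta$ honestly as the image tolerance, recording that the congruence it generates is $\Cg a(T)/\delta$, and making the preliminary normalization $\delta\circ T\circ\delta=T$ so that Theorem~2.19~(10) is applied to relations that visibly contain $\delta$.
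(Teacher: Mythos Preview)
Your argument is correct. The paper states the corollary without proof, treating it as an immediate consequence of Theorem~\ref{main}; your write-up simply supplies the routine quotient reduction to the $\delta=0$ case that the paper leaves implicit, and the centralizer manipulations you use (normalizing to $\delta\circ T\circ\delta=T$, then applying Theorem~2.19~(10) with $\delta'=\delta$) are exactly the standard bookkeeping one would expect.
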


\section{Consequences} \label{consequences}

Some of the later results in \cite{shape} depend on the incorrect
Theorem~3.24. We identify them here and indicate
whether they are true as stated, and explain
how to adjust the statements that are not true as stated.

\begin{enumerate}
\item (\cite[Theorem~3.27]{shape})
This theorem examines the following property of a variety $\mathcal V$:
given two perspective
congruence intervals $\alpha/(\alpha\wedge \beta)$
and $(\alpha\vee\beta)/\beta$
of some $\m a\in \mathcal V$,
one interval is abelian if and only the other is abelian.
Call this property $\mathscr B$.
\cite[Theorem~3.27]{shape} asserts that
${\mathscr T}\Rightarrow {\mathscr B}$.
This implication {\bf is false}.
Using Corollary~\ref{main_cor},
we can correct Theorem~3.27 by strengthening
the hypothesis $\mathscr T$ to the
hypothesis $\mathscr W$.
The corrected version then expresses that
${\mathscr W}\Rightarrow {\mathscr B}$.
In fact, it is not hard to see that
${\mathscr W}\Leftrightarrow ({\mathscr T}\;\&\; {\mathscr B})$.
(See Theorem~\ref{M_Def}~(2) below.)

\bigskip

\item
  (\cite[Theorem~4.16]{shape})
  This theorem claims that certain finite lattices
  with some specified centralities
  cannot occur
  as sublattices of congruence lattices
  of algebras in varieties with a Taylor term.
The theorem has three parts and each
of these parts has more than one claim.
Part (1) of the theorem involves three claims.
\begin{itemize}
\item Each of the three claims of 4.16~(1) are {\bf true as stated}.  
\item The first and third of the three claims of 4.16~(2)
  are {\bf true as stated}.  The second claim of 4.16~(2)
  {\bf is false}.
\item The two claims of 4.16~(3)
  are derived from the false claim of 4.16~(2), hence
  {\bf we withdraw these two claims.}
  We have neither proof nor counterexample for these claims.
\item {\bf All claims of all items in Theorem 4.16 are
  true when the hypothesis ``${\mathcal V}$ has a Taylor term''
  is strengthened to ``${\mathcal V}$ has a weak difference term''.}
  (However, the two claims of 4.16~(3) were already known
  to be true in the presence of a weak difference term,
  cf.\ \cite[Corollary 5.8]{lipparini}.)  
\end{itemize}  

We give more detail about the second bullet point.
  Using the labeling in
  Figure~\ref{fig8}, it is stated in \cite[Theorem~4.16~(2)]{shape}
  that a pentagon of congruences
  where $\C C(\beta,\theta;\delta)$ holds
  cannot occur as a sublattice in a variety with a Taylor term.
  The proof
  given for this is correct.
  But then it is stated that
a pentagon of congruences
  where $\C C(\beta,\beta;\beta\wedge \delta)$ holds
  cannot occur as a sublattice.
Call this property ${\mathscr C}$.
  The proof given that ${\mathscr T}\Rightarrow {\mathscr C}$
  depends on \cite[Theorem~3.27]{shape}, which
  (we have seen) is false without a weak difference term.
  In fact, it can be shown by arguments like those above
  that ${\mathscr W}\Leftrightarrow ({\mathscr T}\;\&\; {\mathscr C})$.
  ((See Theorem~\ref{M_Def}~(3) below.))
To summarize, the part of \cite[Theorem~4.16~(2)]{shape}
  that refers to $\C C(\beta,\beta;\beta\wedge \delta)$
  is false as stated, but becomes
  true when the hypothesis ``${\mathcal V}$ has a Taylor term''
  is strengthened to ``${\mathcal V}$ has a weak difference term''.
 
\bigskip

\item
  (\cite[Lemma~6.7]{shape})
  This lemma cites \cite[Theorem~3.24]{shape} in its proof, but
  one of the hypotheses of \cite[Lemma~6.7]{shape} is that
  the variety has a weak difference term.
  In this setting, \cite[Theorem~3.24]{shape} has been shown
  to hold in Corollary~\ref{main_cor} above.
  Consequently, \cite[Lemma~6.7]{shape} is {\bf true as stated}.

\bigskip

\item
  (\cite[Theorem~6.25]{shape})
  The proof of \cite[Theorem~6.25]{shape}
  refers to Theorem~3.27 immediately before Claim 6.26
  to complete part of the argument.
  An alternative argument is suggested in that proof which uses
  Lemma~6.10 instead. This alternative argument
  does not rely on Theorem 3.24.
  Moreover, the hypothesis of 
  \cite[Theorem~6.25]{shape} includes that $\mathcal V$ has a
  weak difference term, and in this setting
  Corollary~\ref{main_cor} allows us to leave the proof
  as it is.
Thus, \cite[Theorem~6.25]{shape} is {\bf true as stated}.  
  
\bigskip

\item
  (\cite[Theorem~8.1]{shape})
This theorem refers to
  \cite[Theorem~3.24]{shape}
  in its proof of (4)$\Rightarrow$(5).
  The reference can be eliminated, as we now
  explain. Item (4) is the assertion
  that $\mathcal V$ contains no algebra
  with a nontrivial abelian congruence.
Item (5) is the assertion
  that $\mathcal V$ contains no algebra
  with a nontrivial abelian tolerance.
  What we need to establish, therefore, is that
  any variety which omits nontrivial abelian 
  congruences also omits nontrivial abelian
  tolerances.
  We can use Theorem~\ref{main} above to do this.
Assume that Item (4) of \cite[Theorem~8.1]{shape} holds.  
Since $\mathcal V$  contains no algebra
  with a nontrivial abelian congruence,
  the ternary projection $w(x,y,z):=x$
  is a weak difference term for $\mathcal V$.
  Any variety with a weak difference term has a Taylor
  term, so Theorem~\ref{main}  applies to $\mathcal V$.
  In particular, from the implication
  (1)$\Rightarrow$(3) of Theorem~\ref{main} 
  we see that an abelian tolerance on an algebra
  in $\mathcal V$ is a congruence.
  Since abelian congruences in $\mathcal V$ are trivial,
abelian tolerances in $\mathcal V$ are also trivial,
so Item (5) of \cite[Theorem~8.1]{shape} holds.
This shows that \cite[Theorem~8.1]{shape} is {\bf true as stated}.  
\end{enumerate}

\bigskip

\noindent
{\bf Summary.}
\begin{enumerate}
\item  (\cite[Theorem~3.24]{shape}) is {\bf false as stated}.
Theorem~3.24 is {\bf true
when the hypothesis ``${\mathcal V}$ has a Taylor term''
  is strengthened to ``${\mathcal V}$ has a weak difference term''.}
\item (\cite[Theorem~3.27]{shape}) is {\bf false as stated}.
  Theorem~3.27 is {\bf true
when the hypothesis ``${\mathcal V}$ has a Taylor term''
  is strengthened to ``${\mathcal V}$ has a weak difference term''.}
\item (\cite[Theorem~4.16]{shape})
  \begin{itemize}
\item Each of the three claims of 4.16~(1) are {\bf true as stated}.  
\item The first and third of the three claims of 4.16~(2)
  are {\bf true as stated}.  The second claim of 4.16~(2)
  {\bf is false}.
\item We {\bf do not know whether the two claims of 4.16~(3) are true}.
\item {\bf All claims of all items in Theorem 4.16 are
  true when the hypothesis ``${\mathcal V}$ has a Taylor term''
  is strengthened to ``${\mathcal V}$ has a weak difference term''.}
\end{itemize}
\item (\cite[Lemma~6.7]{shape}) is {\bf true as stated}.  
\item (\cite[Theorem~6.25]{shape}) is {\bf true as stated}.  
\item (\cite[Theorem~8.1]{shape})  is {\bf true as stated}.  
\end{enumerate}
\bigskip

\section{Afterword} \label{after}

We close this note by recording 
the results of the form
${\mathscr W}\Leftrightarrow ({\mathscr T}\;\&\; {\mathscr X})$,
where ${\mathscr X} = {\mathscr A}, {\mathscr B}$ or ${\mathscr C}$,
which we have considered in this paper,
in the form of a theorem statement.
Results of the form
${\mathscr W}\Leftrightarrow ({\mathscr T}\;\&\; {\mathscr X})$,
assert that Property $\mathscr X$
is Maltsev definable relative to
$\mathscr T$ (= existence of a Taylor term)
by the Maltsev condition that defines $\mathscr W$
(= existence of a weak difference term).

Observe that ${\mathscr W}~\Leftrightarrow~({\mathscr T}\;\&\; {\mathscr X})$
is equivalent to the conjunction of
(i)~${\mathscr W}~\Rightarrow~{\mathscr T}$,
(ii)~${\mathscr W}~\Rightarrow~{\mathscr X}$, and
(iii)~$({\mathscr T}\;\&\; {\mathscr X})~\Rightarrow~{\mathscr W}$.
Implication (i) is known to be true:
the class of varieties
with a weak difference term is definable
by a nontrivial idempotent Maltsev condition
(cf.\ \cite[Theorem~4.8]{kearnes-szendrei}) and
the class of varieties
with a Taylor term is definable
by the weakest nontrivial idempotent Maltsev condition
(cf.\ Corollaries~5.2 and 5.3 of \cite{taylor}.)
Implication (iii) may be rewritten in the form
(iii)' $({\mathscr T}\;\&\;\neg{\mathscr W})\Rightarrow \neg {\mathscr X}$.
Altogether, we want to record why 
\begin{enumerate}
\item[(ii)] ${\mathscr W}~\Rightarrow~{\mathscr X}$, and
\item[(iii)'] $({\mathscr T}\;\&\; \neg{\mathscr W})\Rightarrow \neg {\mathscr X}$
\end{enumerate}  
hold when ${\mathscr X} = {\mathscr A}, {\mathscr B}$ or ${\mathscr C}$.

\begin{thm} \label{M_Def}
If $\mathcal V$ has a Taylor term,
then any one of the following properties 
will hold for $\mathcal{V}$ if and only if
$\mathcal{V}$ has a weak difference term.
\begin{enumerate}
\item[(1)] {\rm ($\mathcal V$ satisfies ${\mathscr A}$.)}
  In any algebra of $\mathcal V$,
  the congruence generated by an
  abelian tolerance is abelian.
\item[(2)] {\rm ($\mathcal V$ satisfies ${\mathscr B}$.)}
Given two perspective
congruence intervals $\alpha/(\alpha\wedge \beta)$
and $(\alpha\vee\beta)/\beta$
of some $\m a\in \mathcal V$,
one interval is abelian if and only the other is abelian.
\item[(3)] {\rm ($\mathcal V$ satisfies ${\mathscr C}$.)}
  A pentagon of congruences, labeled as in 
    Figure~\ref{fig9},
    cannot occur as a sublattice of $\Con(\m a)$
    if $\m a\in \mathcal{V}$ and 
$\C C(\beta,\beta;\beta\wedge \delta)$ holds.
\begin{figure}[ht]
\begin{center}
\setlength{\unitlength}{1mm}
\begin{picture}(20,33)
\put(0,15){\circle*{1.2}}
\put(10,0){\circle*{1.2}}
\put(10,30){\circle*{1.2}}
\put(20,10){\circle*{1.2}}
\put(20,20){\circle*{1.2}}

\put(10,0){\line(-2,3){10}}
\put(10,0){\line(1,1){10}}
\put(10,30){\line(-2,-3){10}}
\put(10,30){\line(1,-1){10}}
\put(20,20){\line(0,-1){10}}

\put(-4.5,13){$\beta$}
\put(22,9){$\delta$}
\put(22,19){$\theta$}
\put(9,32){$\alpha$}
\put(6,-5){$\beta\wedge\delta$}
\end{picture}
\bigskip

\caption{\sc Forbidden sublattice if 
  $\C C(\beta,\beta;\beta\wedge\delta)$ holds.}\label{fig9}
\end{center}
\end{figure}

\end{enumerate}
\end{thm}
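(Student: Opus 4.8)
The plan is to follow the template set out just before the theorem: for each of $\mathscr X\in\{\mathscr A,\mathscr B,\mathscr C\}$ one verifies (ii) $\mathscr W\Rightarrow\mathscr X$ and (iii)$'$ $(\mathscr T\;\&\;\neg\mathscr W)\Rightarrow\neg\mathscr X$; combined with the already-known implication $\mathscr W\Rightarrow\mathscr T$, these give $\mathscr W\Leftrightarrow(\mathscr T\;\&\;\mathscr X)$, which is the content of the theorem. Item (1) needs nothing new: ``$\mathscr A\Leftrightarrow\mathscr W$ under $\mathscr T$'' is precisely the equivalence of (1) and (4) in Theorem~\ref{main}, whose proof already supplies (ii) as the chain (1)$\Rightarrow$(2)$\Rightarrow$(3)$\Rightarrow$(4) and (iii)$'$ as the argument $\neg(1)\Rightarrow\neg(4)$. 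So only (2) and (3) require work.

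For (2) I would do (iii)$'$ first. If $\mathscr W$ fails, \cite[Theorem~4.5]{relative} (stated in the introduction) yields some $\m a\in\mathcal V$ carrying an $N_5$ labeled as in Figure~\ref{fig8} with $[\alpha,\alpha]=0$ and $\C C(\theta,\alpha;\delta)$. In any such pentagon $\beta\wedge\delta=0$ and $\beta\vee\delta=\alpha$, so the intervals $\beta/(\beta\wedge\delta)=\beta/0$ and $(\beta\vee\delta)/\delta=\alpha/\delta$ are perspective; the first is abelian because $\beta\leq\alpha$ forces $[\beta,\beta]=0$, while the second is not, since $\C C(\alpha,\alpha;\delta)$ would entail $\C C(\beta,\theta;\delta)$ by monotonicity (Theorem~2.19(1)), contradicting the correct first part of \cite[Theorem~4.16~(2)]{shape} --- the lone place where the Taylor term hypothesis enters. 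This establishes $\neg\mathscr B$. For (ii), $\mathscr W\Rightarrow\mathscr B$ is the corrected \cite[Theorem~3.27]{shape}, which is valid by Corollary~\ref{main_cor}; I would nonetheless include the direct argument, as it mirrors the proof of Theorem~\ref{main}. Given perspective intervals $\beta/(\alpha\wedge\beta)$ and $(\alpha\vee\beta)/\alpha$ of an $\m a\in\mathcal V$: the implication ``upper abelian $\Rightarrow$ lower abelian'' uses only Theorem~2.19 (parts (1),(6),(8)) and no Maltsev hypothesis; for ``lower abelian $\Rightarrow$ upper abelian'' one passes from $\C C(\beta,\beta;\alpha\wedge\beta)$ --- via Theorem~2.19(4),(8),(5),(3),(1) and then (10) --- to the statement that $S/\alpha$ is an abelian tolerance of $\m a/\alpha$, where $S=\alpha\circ\beta\circ\alpha$, and then invokes Theorem~\ref{main}(3) to conclude that $S/\alpha$ is a congruence, hence equals its transitive closure $(\alpha\vee\beta)/\alpha$, which is therefore abelian.

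For (3), both directions reduce to facts now available. For (iii)$'$, the very pentagon produced above when $\mathscr W$ fails has bottom $0=\beta\wedge\delta$ and, from $[\beta,\beta]=0$, satisfies $\C C(\beta,\beta;\beta\wedge\delta)$; read with the labels of Figure~\ref{fig9} it is exactly a configuration that $\mathscr C$ forbids, so $\neg\mathscr C$ holds. For (ii), assume $\mathscr W$ --- hence $\mathscr T$, hence $\mathscr B$ by part (2) --- and suppose some $\m a\in\mathcal V$ had an $N_5$ labeled as in Figure~\ref{fig9} with $\C C(\beta,\beta;\beta\wedge\delta)$. Then the interval $\beta/(\beta\wedge\delta)$ is abelian, so its perspective partner $(\beta\vee\delta)/\delta=\alpha/\delta$ is abelian by $\mathscr B$; thus $\C C(\alpha,\alpha;\delta)$ holds, whence $\C C(\beta,\theta;\delta)$ by Theorem~2.19(1). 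Quotienting by $\beta\wedge\delta$ (legitimate since $\beta\wedge\delta\leq\beta,\theta,\delta$, using Theorem~2.19(10)) turns the pentagon into one labeled as in Figure~\ref{fig8} in which $\C C(\overline{\beta},\overline{\theta};\overline{\delta})$ holds, contradicting the first part of \cite[Theorem~4.16~(2)]{shape}.

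The step I expect to be the main obstacle is (ii) for $\mathscr B$, i.e.\ $\mathscr W\Rightarrow\mathscr B$ (on which the $\mathscr W\Rightarrow\mathscr C$ half also rests): moving from an abelian lower interval to an abelian upper interval cannot be carried out within the centralizer calculus of Theorem~2.19, because $\alpha\circ\beta\circ\alpha$ is in general a proper sub-tolerance of $\alpha\vee\beta$ and the centralizer does not behave monotonically under enlarging the second variable. It is exactly here that the weak difference term must be used, through the fact (Theorem~\ref{main}(3)) that abelian tolerances are congruences. A secondary point to watch is the bookkeeping in the third coordinate of the centralizer relation --- using Theorem~2.19(4) rather than a blanket ``monotonicity in $\delta$'' --- together with the routine verifications that the various subdiagrams really are pentagons with the stated meets and joins.
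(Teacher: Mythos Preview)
Your proposal is correct and follows essentially the same route as the paper: Item~(1) is deferred to Theorem~\ref{main}, and for Items~(2) and~(3) you verify (ii) and (iii)$'$ using the pentagon supplied by \cite[Theorem~4.5]{relative} together with the valid first claim of \cite[Theorem~4.16(2)]{shape}, exactly as the paper does. The only difference is cosmetic: where the paper establishes $\mathscr W\Rightarrow\mathscr B$ and $\mathscr B\Rightarrow\mathscr C$ by pointing out that the original \cite{shape} arguments become valid once Corollary~\ref{main_cor} is in hand, you unpack those arguments explicitly via the centralizer calculus and Theorem~\ref{main}(3)---a welcome self-contained elaboration, but not a different strategy.
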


\begin{proof}
Item~(1) is Theorem~\ref{main} $(1)\Leftrightarrow(4)$.

It was noted above in Consequence~(1)
that the erroneous proof in \cite{shape}
that $\mathscr T\Rightarrow \mathscr B$
is nevertheless
valid as a proof of
$\mathscr W\Rightarrow \mathscr B$. This gives us
(ii) ${\mathscr W}~\Rightarrow~{\mathscr B}$, as desired.
To prove (iii)', that is, to prove
$({\mathscr T}\;\&\; \neg{\mathscr W})\Rightarrow \neg {\mathscr B}$,
assume that $\mathcal{V}$ has a Taylor term but not a
weak difference term.
By Theorem~4.5~(5) of \cite{relative}, there exists
$\m a\in \mathcal{V}$ with a pentagon
in its congruence lattice labeled as in Figure~\ref{fig8}.
Since $[\alpha,\alpha]=0$ in this pentagon,
we get that $[\beta,\beta]=0$ by monotonicity.
Equivalently, the congruence quotient
$\beta/(\beta\wedge\delta)$ is abelian.
For the purpose of obtaining a contradiction
assume that $\mathcal{V}$ satisfies $\mathscr B$.
This allows us to deduce that the perspective interval
$(\beta\vee\delta)/\delta = \alpha/\delta$ is also
abelian. This is equivalent to the statement that 
$\C C(\alpha,\alpha;\delta)$ holds, and therefore
we derive that $\C C(\beta,\theta;\delta)$ holds
from Theorem~2.19 of \cite{shape} (i.e., by the monotonicity of
the centralizer relation in the ﬁrst two variables). 
But according to Theorem~4.2 of \cite{relative},
no such pentagon can exist in the congruence lattice
of any algebra with a Taylor term. This contradiction establishes
Item~(2). 

To prove Item~(3), 
we assert that 
that the erroneous proof in \cite{shape} of Theorem~4.16~(2)~(i),
which claims 
that $\mathscr T\Rightarrow \mathscr C$,
is nevertheless
valid as a proof of
$\mathscr W\Rightarrow \mathscr C$.
Indeed, the proof of
$\mathscr T\Rightarrow \mathscr C$ from
\cite{shape} is actually a valid proof
that $\mathscr B\Rightarrow \mathscr C$
coupled together with an erroneous
reference to
$\mathscr T\Rightarrow \mathscr B$.
We have argued above that $\mathscr W\Rightarrow \mathscr B$,
so coupling this with the valid proof
that $\mathscr B\Rightarrow \mathscr C$ yields
$\mathscr W\Rightarrow \mathscr C$.

To complete the proof of this theorem,
we must prove (iii)'
$({\mathscr T}\;\&\; \neg{\mathscr W})\Rightarrow \neg {\mathscr C}$.
Assume that $\mathcal{V}$ fails $\mathscr W$, i.e. $\mathcal{V}$
does not have a weak difference term.
By Theorem 4.5~(5)  of \cite{relative}
we know that there exists $\m a\in \mathcal{V}$
with congruences labeled as in Figure~\ref{fig8}
such that $[\alpha,\alpha]=0=\beta\wedge\delta$ 
and $\C C(\theta,\alpha;\delta)$.
Since $[\alpha,\alpha]=0=\beta\wedge\delta$ we have
$\C C(\alpha,\alpha;\beta\wedge\delta)$
by the definition of the commutator,
hence $\C C(\beta,\beta;\beta\wedge\delta)$
by the monotonicity of the centralizer in its first two variables.
Thus, $\m a$ has a
pentagon of congruences that violate $\mathscr C$.
This establishes that $\neg \mathscr W\Rightarrow \neg \mathscr C$,
so the weaker implication
$(\mathscr T\;\&\;\neg \mathscr W)\Rightarrow \neg \mathscr C$ also holds.\footnote{We have actually established that $\mathscr W\Leftrightarrow \mathscr C$.}
\end{proof}
  
\section*{Acknowledgement}
  We thank Ralph Freese for comments on an early draft
  of this paper and the referee for comments on the
  succeeding drafts.

\bigskip
  
\section*{Declarations}

\medskip

\noindent
{\bf Ethical approval.} Not applicable.

  
\noindent
{\bf Competing interests.} Both authors are on the editorial board of Algebra Universalis.

  
\noindent
{\bf Authors' contributions.} 
All authors contributed equally.

  
\noindent
{\bf Availability of data and materials.}
No new data were created or analyzed during this study.
Data sharing is not applicable to this article,
as no datasets were generated or used.

  
\noindent
{\bf Funding.}
Not applicable.

\bibliographystyle{plain}

\end{document}